\newcommand{\Irr}{{\mathrm{Irr}}}
\newcommand{\Ind}{{\mathrm{Ind}}}
\numberwithin{equation}{section}
\newtheorem{Prop}[equation]{Proposition}
\newtheorem{Lem}[equation]{Lemma}
\newtheorem{Thm}[equation] {Theorem}
\title
[Trace Paley--Wiener Theorem]
{A Remark on a Trace Paley--Wiener Theorem}
\author{Goran Mui\'c}
\address{Department of Mathematics, Faculty of Science, University of Zagreb,
Bijeni\v cka 30, 10000 Zagreb,
Croatia} 
 \email{gmuic@math.hr}
\subjclass[2010]{11E70, 22E50}
\keywords{Paley--Wiener theorem, admissible representations, reductive $p$--adic groups}
\thanks{The  author acknowledges Croatian Science Foundation grant no. 9364.}
\begin{document}
\maketitle 
\begin{abstract}
  In this paper we prove a version of a trace Paley--Wiener theorem
  for tempered representations of  a reductive $p$--adic group. This is applied to complete certain investigation of Shahidi on the proof that
  a Plancherel measure is invariant of a $L$--packet of discrete series. 
\end{abstract}

\section{Introduction}

Let $G$ be a reductive $p$--adic group.
Let $Rep(G)$ be the category of smooth admissible complex representations of $G$ of finite length, and let 
$R(G)$ be the corresponding Grothendieck group. We write $\Psi(G)$  (resp., $\Psi^u(G)$) for the group of (resp., unitary) 
unramified characters of $G$. The group $\Psi(G)$   has a structure of an algebraic variety (a complex tours). The corresponding algebra of regular functions 
$\mathbb C\left[\Psi(G)\right]$ is generated by evaluations on elements of $G$ as a $\mathbb C$--algebra. The subgroup  $\Psi^u(G)$ is Zariski dense in 
 $\Psi(G)$. We say that a complex function is regular on $\Psi^u(G)$ if it is a restriction 
of a regular function on $\Psi(G)$. We observe that the restriction map from $\mathbb C\left[\Psi(G)\right]$  into functions on
 $\Psi^u(G)$ is injective since $\Psi^u(G)$ is Zariski dense in 
 $\Psi(G)$. 

We fix a minimal parabolic subgroup $P_0$, its Levi decomposition  $P_0=M_0U_0$, and, as usual related to these choices,  we fix a set of 
standard parabolic subgroups $P=MU$, where $M_0\subset M$, $P=MP_0$. Since the standard parabolic subgroup is determined by the choice of Levi  subgroup,
the normalized parabolic induction $\Ind_P^G(\sigma)$, where $\sigma$ is a smooth representation of $M$, we write as usual $i_{GM}(\sigma)$.

 In \cite{bdk}, Bernstein, Deligne, and Kazhdan  proved a trace Paley--Wiener theorem for category $Rep(G)$. We consider 
 a full subcategory $Rep_t(G)$ of $Rep(G)$ consisting of representations having all irreducible subquotients tempered. Let $R_t(G)$ be the corresponding Grothendieck group. 
We write  $R^i_t(G)$ for the subgroup of $R_t(G)$ generated by $i_{GM}(\sigma)$, where $M$ ranges over all  standard Levi subgroups of $G$ (including $G$), and 
$\sigma$ ranges over a set of square--integrable modulo center irreducible representations of $M$. We warn the reader that this notion is  
not an analogue of the notion of strictly induced modules from (\cite{bdk}, 3.1). An analogue would be the subgroup of $R_t(G)$ generated by $i_{GM}(\tau)$, where
where $M$ ranges over all {\bf proper} standard Levi subgroups of $G$, and  $\tau$ ranges over irreducible tempered representations of $M$. But this is not useful for us
in the present paper.

\vskip .2in 
The main result of the present paper is the following version of a trace Paley--Wiener theorem:

\begin{Thm}\label{main} Let $f: R_t(G)\longrightarrow \mathbb C$ be a $\mathbb Z$--linear form such that
  the following hold:
  \begin{itemize}
  \item[(i)] There exists an open compact subgroup $K\subset G$
which dominates $f$ (i.e., $f$ is non--zero only on those irreducible tempered representations which have a non--trivial space of 
$K$--invariant vectors),
\item[(ii)]  For each  standard maximal Levi subgroup $M$, or $M=G$,  and a 
square--integrable modulo center representation  $\sigma$ of $M$,  the  function $\psi\mapsto f(i_{GM}(\psi\sigma))$ is regular on 
$\Psi^u(M)$, and for any other proper standard Levi subgroup $N$, and a  square--integrable modulo center representation  $\tau$ of $N$,
  we have  $f(i_{GN}(\tau))=0$.
\end{itemize}
Then, there exists $F\in C_c^\infty(G)$ such that
  $$
  f(\pi)=\text{tr}{(\pi(F))}, \ \ \text{ for all $\pi\in R^i_t(G)$.}
  $$
\end{Thm}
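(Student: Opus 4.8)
The plan is to derive the statement from the trace Paley--Wiener theorem of Bernstein--Deligne--Kazhdan \cite{bdk}. For that it suffices to extend $f$ to a $\mathbb Z$--linear form $\wit f\colon R(G)\to\mathbb C$ satisfying the two conditions of \cite{bdk} — domination by an open compact subgroup, and regularity on $\Psi(M)$ of $\psi\mapsto\wit f(i_{GM}(\psi\rho))$ for every standard Levi $M$ and every $\rho\in\Irr(M)$ — and agreeing with $f$ on $R^i_t(G)$; indeed \cite{bdk} then produces $F\in C_c^\infty(G)$ with $\wit f(\pi)=\mathrm{tr}\,\pi(F)$ for all $\pi$, whence $f(\pi)=\mathrm{tr}\,\pi(F)$ on $R^i_t(G)$. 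As in \cite{bdk}, the regularity condition need only be checked for $\rho$ square--integrable modulo center.

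To construct $\wit f$ I would continue analytically the functions occurring in the hypothesis. For a standard Levi $M$ and $\sigma$ square--integrable modulo center of $M$, put $\phi_{M,\sigma}(\psi)=f(i_{GM}(\psi\sigma))$ for $\psi\in\Psi^u(M)$. When $M$ is a maximal proper Levi — or $M=G$ — the hypothesis, combined with Zariski density of $\Psi^u(M)$ in $\Psi(M)$ and injectivity of the restriction of regular functions (as recalled in the introduction), gives a unique $\Phi_{M,\sigma}\in\mathbb C[\Psi(M)]$ extending $\phi_{M,\sigma}$; for every deeper proper $M$ the vanishing hypothesis forces $\phi_{M,\sigma}$ to be $0$, so one takes $\Phi_{M,\sigma}=0$. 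One then defines $\wit f$ on the subgroup $V\subset R(G)$ generated by all classes $[i_{GM}(\psi\sigma)]$ with $\psi$ ranging over the whole torus $\Psi(M)$ by $\wit f([i_{GM}(\psi\sigma)])=\Phi_{M,\sigma}(\psi)$, and extends it to $R(G)$; this extension is not unique, but since $\mathbb C$ is a divisible (hence injective) $\mathbb Z$--module it may be chosen so as to stay dominated by $K$. Because $R^i_t(G)\subset V$ and on $R^i_t(G)$ the value $\Phi_{M,\sigma}(\psi)$ is by construction $f$ of a genuine tempered representation, $\wit f$ agrees with $f$ there; and the regularity condition of \cite{bdk} holds because for $\rho$ square--integrable modulo center $\psi\mapsto\wit f(i_{GM}(\psi\rho))=\Phi_{M,\rho}(\psi)$ is regular by construction.

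The substantive point — where the argument really has to be made — is that $\wit f$ is well defined on $V$, and dominated by $K$. I would reduce both to the unitary locus: a $\mathbb Z$--linear relation among classes $[i_{GM_i}(\psi_i\sigma_i)]$ that holds identically in the parameters restricts, over the Zariski dense set where each $\psi_i$ lies in $\Psi^u(M_i)$, to a relation among actual tempered representations, and is respected there because $f$ is a bona fide linear form on $R_t(G)$ vanishing on representations without $K$--fixed vectors; regularity of the $\Phi_{M,\sigma}$ then carries the relation, and the $K$--domination, over to all parameters. A relation holding only on a proper subvariety has to be dealt with as a limit of generic relations, using the rational (constructible, stratified) structure of the Bernstein components and of the decomposition of parabolically induced modules into irreducibles, and it is precisely here that the vanishing hypothesis on the non--maximal proper Levi subgroups $N$ is used, to guarantee that the pieces $\Phi_{M,\sigma}$ fit together consistently along the lower strata. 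I expect this last bundle of compatibility verifications — fitting the maximal--Levi data coherently into the stratification of $R(G)$ — to be the main obstacle; with it in hand, \cite{bdk} concludes the proof.
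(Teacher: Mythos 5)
Your overall plan---reduce to the Bernstein--Deligne--Kazhdan theorem by extending $f$ to a linear form on $R(G)$---is also the paper's plan, but the single step you defer is in fact the whole content of the proof, and your sketch of it does not go through. Assigning values $\Phi_{M,\sigma}(\psi)$ to the generators $[i_{GM}(\psi\sigma)]$, $\psi\in\Psi(M)$, of your subgroup $V$ defines a linear form on $V$ only if the assignment respects every $\mathbb Z$--linear relation among these classes inside $R(G)$, and such relations are plentiful: at non--unitary reducibility points the irreducible constituents of $i_{GM}(\psi\sigma)$ are Langlands quotients that also occur in induced modules built from other, even non--associate and smaller, Levi data, so classes to which you assign $\Phi_{M,\sigma}(\psi)$ and classes to which you assign $0$ are genuinely entangled. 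Your proposed mechanism --- restrict ``relations holding identically in the parameters'' to the Zariski dense unitary locus, and treat the remaining ones as ``limits of generic relations'' --- has no traction here, because these relations hold only at special parameter values and are not specializations of algebraic families of relations; there is no generic relation to degenerate. The same unproved consistency is hidden in your appeal to divisibility of $\mathbb C$ to extend from $V$ to $R(G)$ while keeping $K$--domination (you would need to know that any element of $V$ which is a combination of irreducibles without $K$--fixed vectors receives the value $0$). Note also that for $M=G$ the theorem gives no regularity hypothesis at all, so your $\Phi_{G,\sigma}$ is not licensed by the statement as written.

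The paper sidesteps the well--definedness problem instead of confronting it: the linear form is defined on irreducible representations (equivalently, through the standard modules, which form a $\mathbb Z$--basis of $R(G)$ by Lemma \ref{l-1}), so linearity and well--definedness are automatic, and all the work goes into arranging the \emph{values}. Concretely, in the proof of Proposition \ref{p}: Harish--Chandra's disjointness theorem (Lemma \ref{l-3}) controls exactly when two tempered induced modules share constituents; the value $a(\psi)$ is distributed among the irreducible constituents of $i_{GM}(\psi\sigma)$ along $\Psi^u(G)$--orbits in proportion to multiplicities (the constants $m_{\cal O}$ in steps 1--4), with the $W(D)$--invariance of $a$ (Lemma \ref{p-4}) guaranteeing that this is consistent; $f$ is set to $0$ on all other tempered irreducibles and is defined on non--tempered irreducibles through the standard--module basis (step 6); finally the BDK criteria (domination and regularity for cuspidal pairs) are verified as in Clozel \cite{clo}. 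This basis--level construction is precisely the ``bundle of compatibility verifications'' you flag as the main obstacle and leave open, so your proposal has a genuine gap rather than constituting an alternative proof.
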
 

\vskip .2in
Theorem \ref{main}  is a proved by reduction to the main result of \cite{bdk}
using Harish-Chandra theory of tempered representations \cite{w} and some standard considerations related to the Langlands
classification (\cite{renard}, Chapter VII).   The proof is given in Section \ref{proof}. It is a consequence of its 
 effective version given by Proposition \ref{p}. Proposition \ref{p} constructs a correct function needed in the proof of (\cite{Sh}, Proposition 9.3 2))
in the case when $M$ (see notation there) is a Levi of a maximal parabolic subgroup. We remark that since Plancherel factors are multiplicative, it is enough to prove
(\cite{Sh}, Proposition 9.3 2)) for a maximal Levi subgroup. 

\vskip .2in
I would like to than Gordan Savin for turning my attention to this question. A draft of the paper was written while the author visited the Hong Kong University of Science and Technology
in January of 2018. The author would like to thank A. Moy and the  Hong Kong University of Science and Technology for their hospitality.

\section{Preliminaries} 

We continue with the notation introduced in the introduction. Let $M$ be a standard Levi subgroup. Then, we write 
$\Psi(M)^r$ for the group of all unramified characters $\psi$ which are $\mathbb R_{>0}$--valued. As we stated in the introduction, every standard
Levi subgroup $M$ determine unique standard parabolic subgroup, say $P$. We denote by $\Psi(M)^{r, +}$  the set of all characters from $\Psi(M)^r$
which correspond to the points of the (open) Weyl chamber determined by the roots of  the split component of $M$ which belong to the unipotent radical of $P$
in the usual description of unramified characters (see for example \cite{muic}, Section 2). If $M=G$, then  $\Psi(M)^{r, +}= \Psi(M)^r$.

For  a standard Levi subgroup $M$, an irreducible tempered 
representation $\pi$ of $M$, and  $\psi\in \Psi(M)^{r, +}$, the module $i_{GM}(\psi\pi)$ is called a 
standard module; it has a unique  (Langlands quotient) $L\left(i_{GM}(\psi\pi)\right)$. The condition is empty if $M=G$. 
By the Langlands classification (\cite{renard}, Theorem VII.4.2), every irreducible representation can be expressed in the form  $L\left(i_{GM}(\psi\pi)\right)$ for unique such 
datum $(M, \pi, \psi)$.  
The following standard result will be used in the proof:

\begin{Lem}\label{l-1} The standard modules of $G$ form a $\mathbb Z$--basis of $R(G)$.
\end{Lem}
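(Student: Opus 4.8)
The plan is to prove Lemma~\ref{l-1} by exhibiting an explicit unitriangular change of basis between the standard modules and the irreducible representations, and then invoke the fact that the irreducible representations of $G$ form a $\mathbb{Z}$--basis of $R(G)$ (which is essentially the definition of the Grothendieck group $R(G)$ of finite-length admissible representations). First I would recall the Langlands classification as already stated in the excerpt: every irreducible representation $\tau$ of $G$ is isomorphic to the Langlands quotient $L(\Delta)$ of a unique standard module $\Delta = i_{GM}(\psi\pi)$ with $(M,\pi,\psi)$ a Langlands datum. This gives a canonical bijection between the set $\mathcal{S}$ of (isomorphism classes of) standard modules and the set $\Irr(G)$ of irreducibles, sending $\Delta \mapsto L(\Delta)$.

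The key step is the triangularity statement: for each standard module $\Delta$, in the Grothendieck group one has
\[
[\Delta] = [L(\Delta)] + \sum_{\tau} m_\tau \, [\tau],
\]
where the sum runs over irreducibles $\tau$ that are ``strictly smaller'' than $L(\Delta)$ in a suitable partial order on $\Irr(G)$, and $m_\tau \in \mathbb{Z}_{\ge 0}$. The relevant order is the standard one coming from the Langlands classification: one orders Langlands data (equivalently, their quotients) by comparing the central exponents / the ``positive part'' $\psi$ of the inducing data — concretely, $\tau' < \tau$ if the Langlands parameter of $\tau'$ has inducing character strictly dominated, in the appropriate cone, by that of $\tau$. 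The facts that $L(\Delta)$ occurs with multiplicity one in $\Delta$ and that every other constituent of $\Delta$ is strictly lower in this order are exactly the structural content of the Langlands classification for $p$-adic groups (Borel--Wallach / Konno / Silberger), which I may cite as a standard result. One must check that this partial order is well-founded (no infinite strictly descending chains), which follows because the central exponents of constituents of $i_{GM}(\psi\pi)$ lie in a bounded set determined by $\psi$ and there are only finitely many such data with bounded exponents having a fixed infinitesimal character — so each $R(G)$-computation involves only finitely many terms.

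Given the bijection $\Delta \leftrightarrow L(\Delta)$ and the unitriangularity of the transition matrix with respect to a well-founded order, a standard linear-algebra argument (invert the transition matrix over $\mathbb{Z}$; the inverse is again integral and unitriangular since the diagonal entries are all $1$) shows that $\{[\Delta] : \Delta \in \mathcal{S}\}$ and $\{[L(\Delta)] : \Delta \in \mathcal{S}\} = \{[\tau] : \tau \in \Irr(G)\}$ generate the same subgroup of $R(G)$ and that the former is $\mathbb{Z}$-linearly independent. Since the irreducibles form a $\mathbb{Z}$-basis of $R(G)$ by construction of the Grothendieck group, so do the standard modules. The main obstacle, and the only non-formal input, is the triangularity statement together with well-foundedness of the order; everything else is bookkeeping with the Langlands classification and elementary integer linear algebra. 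In writing the final proof I would simply cite the Langlands classification and the multiplicity-one/support statements from the literature and spend the bulk of the argument on the (short) triangular-matrix inversion over $\mathbb{Z}$.
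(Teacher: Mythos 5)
Your argument is correct and is essentially the same as the paper's: the paper simply defers to Clozel (Proposition 1), whose proof is exactly this unitriangularity-of-the-transition-matrix argument via the Langlands classification, multiplicity one of the Langlands quotient, and finiteness of the relevant set of lower terms. So you have just written out the details that the paper cites.
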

\begin{proof} As in (\cite{clo}, Proposition 1).
\end{proof}

\vskip .2in 
In analogy with (\cite{bdk}, 2.1), we make the following definitions. 

\vskip .2in
Let $\sigma \in\Irr (M)$ where $M$ is a standard Levi subgroup of $G$.  We define the usual affine  variety attached to $\sigma$ 
$$
\Irr(M)\supset D(\sigma)=\Psi(M)\sigma=
\Psi(M)/Stab_{\Psi(M)}(\sigma),
$$
where $Stab_{\Psi(M)}(\sigma)$ is a finite group consisting of all
$\psi\in \Psi(M)$ such that  $\psi\sigma\simeq \sigma$. 

\vskip .2in
If $A$ is a maximal split torus in the centre of $M$, then the restriction map $\Psi(M)\rightarrow  \Psi(A)$ is surjective, and the kernel is a finite 
group. Therefore, by considering the restriction to $A$ we find that

$$
Stab_{\Psi^u(M)}(\sigma)=Stab_{\Psi(M)}(\sigma).
$$
So, we may consider
$$
D^u(\sigma)\overset{def}{=}\Psi^u(M)/Stab_{\Psi^u(M)}(\sigma)\subset D(\sigma).
$$
It is easy to see that $D^u(\sigma)$ is Zariski dense in $D(\sigma)$. 

\vskip .2in

The action of the Weyl group
$$
W(M)=N_G(M)/M
$$
on $\Psi(M)$ is algebraic. Moreover,  $w\in W(M)$ transforms $Stab_{\Psi(M)}(\sigma)$ onto $Stab_{\Psi(M)}(w(\sigma))$. 
So that it maps $D(\sigma)$ (resp., $D^u(\sigma)$) onto $D(w(\sigma))$ (resp., $D^u(w(\sigma))$).

\vskip .2in
Put $D=D(\sigma)$ and $D^u=D^u(\sigma)$. As usually, we consider the group  $W(D)$ of all $w\in W(M)$
 such that there exists 
$\psi_w\in \Psi(M)$ such that 
\begin{equation}\label{20000}
w(\sigma)\simeq \psi_w \sigma.
\end{equation}
The character $\psi_w$ is determined uniquely modulo $Stab_{\Psi(M)}(\sigma)$. The group $W(D)$ acts on
the affine variety $D=\Psi(M)/Stab_{\Psi(M)}(\sigma)$ as follows:

\begin{equation}\label{20001}
w.\psi Stab_{\Psi(M)}(\sigma)=\psi_w w(\psi) Stab_{\Psi(M)}(\sigma).
\end{equation}

\vskip .2in 
The resulting orbit space 
$$
D/W(D)
$$
is again an affine variety with algebra of regular functions given as usual
$$
\mathbb C\left[D/W(D)\right] =\mathbb C\left[D\right]^{W(D)}.
$$

\vskip .2in 

One can construct a regular function $D/W(D)$ in the following way:

\begin{Lem}\label{l-2} Let $F\in C_c^\infty(G)$. Then, the function
  $\psi\longmapsto tr\left(i_{GM}(\psi\sigma)(F)\right)$ is a regular function on
  $D/W(D)$.
\end{Lem}
\begin{proof} It is standard that this function is regular on $D$. We show that it is $W(D)$--invariant. 
Let $w\in W(D)$. By (\cite{bdk}, Lemma 5.4 (iii)), we have 
$$
 tr\left(i_{GM}(\psi\sigma)(F)\right)= tr\left(i_{GM}(w(\psi\sigma))(F)\right)
$$
which completes the proof.
\end{proof}

\vskip .2in 

The above explicit description shows that  analogously defined group $W(D^u)$ is a subgroup of 
$W(D)$. In fact, we have the following lemma:

\begin{Lem}\label{l-200} Assume that the central character $\omega_\sigma: A \longrightarrow \mathbb C^\times$ of $\sigma$ is unitary. Then,
  $W(D^u)=W(D)$. Moreover,   $D^u/W(D)$ is Zariski dense in $D/W(D)$.
\end{Lem}
\begin{proof} As we remarked above, it is always $W(D^u)\subset W(D)$. Conversely, if $w\in W(D)$, then $w(\sigma)\simeq \psi_w \sigma$ by (\ref{20000}).
  Considering central characters, we find that
  $$
  \omega_{w(\sigma)}= \left(\psi_w|_A\right) \omega_\sigma.
  $$
  This implies that $\psi_w|_A$ is an unitary character. By the standard description of unramified characters of $M$, and its relation to unramified characters of $A$, this implies that
  $\psi_w\in \Psi^u(M)$ (see \cite{muic}, Section 2). Hence, $w\in W(D^u)$.  This completes the proof that  $W(D^u)=W(D)$. The remaining claim is obvious from above considerations. 
  \end{proof}

\vskip .2in
The following lemma is a fundamental result of Harish--Chandra:

\begin{Lem}\label{l-3} Assume that $M$ and $N$ are standard Levi subgroups of $G$, and  $\sigma$ and $\tau$ are  square--integrable modulo center 
representations of $M$ and $N$, respectively.
Then,  $i_{GM}(\sigma)$ and  $i_{GN}(\tau)$ have a common irreducible subrepresentation if and only 
if there exists $w\in G$ such that $N=wMw^{-1}$ and $\tau\simeq w(\sigma)$, where $w(\sigma)$ is defined by
$w(\sigma)(n)=\sigma(w^{-1}nw),$ $n\in N$. Moreover, if there exists $w\in G$ such that $N=wMw^{-1}$, then 
 $i_{GM}(\sigma)$ and  $i_{GM}(w(\sigma))$ are isomorphic, and in particular equal in  $R_t(G)$. 
\end{Lem}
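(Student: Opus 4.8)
The plan is to handle the two directions separately. For $(\Leftarrow)$ together with the final assertion: normalized parabolic induction from a Levi subgroup does not depend on the choice of parabolic with that Levi component, and it is invariant under $G$--conjugation of the inducing datum; concretely, if $N=wMw^{-1}$ then $f\mapsto\bigl(x\mapsto f(w^{-1}x)\bigr)$ is a $G$--isomorphism $i_{GM}(\sigma)\longrightarrow i_{GN}({}^{w}\sigma)$. Hence if moreover $\tau\simeq{}^{w}\sigma$ then $i_{GM}(\sigma)\simeq i_{GN}(\tau)$, so the two induced modules have the same irreducible subrepresentations, in particular a common one; and with no hypothesis on $\tau$ the modules $i_{GM}(\sigma)$ and $i_{GN}({}^{w}\sigma)$ are already isomorphic, hence equal in $R_t(G)$, which is the ``Moreover'' statement.

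For $(\Rightarrow)$, let $\pi$ be a common irreducible subrepresentation of $i_{GM}(\sigma)$ and $i_{GN}(\tau)$, and write $P=MU$, $Q=NV$. Since $\sigma$ and $\tau$ are square--integrable modulo centre, they are unitary, so $i_{GM}(\sigma)$ and $i_{GN}(\tau)$ are tempered and hence so is $\pi$. By Frobenius reciprocity, $\Hom_G\bigl(\pi,i_{GN}(\tau)\bigr)\simeq\Hom_N\bigl(r_{Q}(\pi),\tau\bigr)$ with $r_{Q}$ the normalized Jacquet functor, so $\tau$ is a quotient of $r_{Q}(\pi)$; as $r_{Q}$ is exact and $\pi\hookrightarrow i_{GM}(\sigma)$ we get $r_{Q}(\pi)\hookrightarrow r_{Q}\bigl(i_{GM}(\sigma)\bigr)$, and therefore $\tau$ is an irreducible subquotient of $r_{Q}\,i_{GM}(\sigma)$. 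By the geometric lemma of Bernstein--Zelevinsky, $r_{Q}\,i_{GM}(\sigma)$ has a finite filtration whose successive quotients are the modules
$$
i_{N,\,N\cap wMw^{-1}}\Bigl({}^{w}\bigl(r_{M\cap w^{-1}Nw,\,M}(\sigma)\bigr)\Bigr),\qquad w\in W_N\backslash W/W_M ,
$$
with $W$ the Weyl group of $G$ and $W_M$, $W_N$ those of $M$, $N$. Consequently there is such a $w$ for which $\tau$ is an irreducible subquotient of $i_{N,N_w}({}^{w}\xi)$, where $N_w:=N\cap wMw^{-1}$, $M_w:=M\cap w^{-1}Nw$ (so $wM_ww^{-1}=N_w$), and $\xi$ is an irreducible subquotient of $r_{M_w,M}(\sigma)$.

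It remains to show that only the double coset with $M_w=M$ and $N_w=N$ can occur, and this is where Casselman's square--integrability criterion enters. Because $\sigma$ is square--integrable modulo centre, every exponent of $\sigma$ along the (a priori proper) Levi $M_w$ --- in particular the central character of $\xi$ restricted to the split centre $A_{M_w}$ --- has real part lying strictly inside the negative cone attached to $\mathfrak{a}_{M_w}/\mathfrak{a}_{M}$, unless $M_w=M$ and $\xi\simeq\sigma$. Transporting this by $w$ and applying the geometric lemma again to $r_{N_w,N}\,i_{N,N_w}({}^{w}\xi)$ to read off the exponents of $\tau$ along $N_w$, one compares with the fact --- forced by Casselman's criterion applied to the square--integrable $\tau$ --- that every exponent of $\tau$ along $N_w$ is strictly negative relative to $\mathfrak{a}_{N_w}/\mathfrak{a}_{N}$; this is incompatible with the real part just described unless $N_w=N$. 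Once $N_w=N$ the induction step is trivial, so $\tau\simeq{}^{w}\xi$; but then $\xi\simeq{}^{w^{-1}}\tau$ is square--integrable modulo centre, so it has unitary central character on $A_{M_w}$, which by Casselman's criterion for $\sigma$ is possible only when $\mathfrak{a}_{M_w}/\mathfrak{a}_{M}=0$, i.e.\ $M_w=M$ and $\xi\simeq\sigma$. Therefore $N=wM_ww^{-1}=wMw^{-1}$ and $\tau\simeq{}^{w}\sigma$, as desired.

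The genuine obstacle is the exponent comparison in the previous paragraph: showing that the ``negative'' central character coming from a proper Jacquet module of a square--integrable representation of $M$ cannot be fed into a square--integrable subquotient of a module induced from a proper parabolic of $N$. This is precisely the input attributed to Harish--Chandra, and one is free instead simply to cite it --- for instance in the equivalent form that the datum $(M,\sigma)$, with $\sigma$ square--integrable modulo centre and $\pi\hookrightarrow i_{GM}(\sigma)$, is unique up to $G$--conjugacy. To make the cone estimates cleanest one may also first reduce to a common unitary supercuspidal support: from $\sigma\hookrightarrow i_{M,L}(\rho)$ and $\tau\hookrightarrow i_{N,L'}(\rho')$ with $\rho$, $\rho'$ unitary supercuspidal one gets $\pi\hookrightarrow i_{G,L}(\rho)$ and $\pi\hookrightarrow i_{G,L'}(\rho')$ by induction in stages, so $(L,\rho)$ and $(L',\rho')$ are $G$--conjugate by uniqueness of the supercuspidal support, and after replacing $(N,\tau)$ by a $G$--conjugate --- which does not affect the conclusion --- one may assume $(L,\rho)=(L',\rho')$.
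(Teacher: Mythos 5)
The paper offers no argument for this lemma at all: it is quoted as a fundamental result of Harish--Chandra and the ``proof'' is the single citation of Waldspurger's Plancherel article \cite{w}. Your proposal is therefore more ambitious than the paper's treatment, and its skeleton --- Frobenius reciprocity, exactness of the Jacquet functor, the Bernstein--Zelevinsky geometric lemma, Casselman's square--integrability criterion, and the preliminary reduction to a common unitary cuspidal support --- is indeed the standard route by which this disjointness theorem is proved in the literature. As a comparison of approaches, what your route buys is an explanation of \emph{why} the result holds; what the citation buys is correctness, since the delicate part of the argument is precisely what Waldspurger carries out in detail.

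That delicate part is also where your write-up stops being a proof. The decisive step is the cone comparison in your third paragraph: showing that a strictly negative exponent of $\sigma$ along a proper Levi $M_w$ cannot, after conjugation by $w$ and induction to $N$, produce a subquotient carrying the unitary $A_N$--central character that square--integrability of $\tau$ forces. You do not actually run this inequality through the relevant obtuse/acute cones for $\mathfrak{a}_{M_w}$ relative to $\mathfrak{a}_M$ and $\mathfrak{a}_N$; you acknowledge the gap and then propose to cite the statement, at which point your argument collapses to the same external reference (Harish--Chandra via \cite{w}) that the paper makes. A second, smaller gap sits in the converse and the ``Moreover'' part: the map $f\mapsto f(w^{-1}\,\cdot\,)$ only identifies induction from $P=MU$ with induction from $wPw^{-1}$, which need not be the standard parabolic with Levi $N$; the independence of normalized induction on the choice of parabolic with a given Levi, which you invoke as if routine, is false for general inducing data (this is exactly the sub-versus-quotient phenomenon in the Langlands classification) and, for square--integrable $\sigma$, is itself a nontrivial fact, obtained either from the character identity in $R(G)$ together with semisimplicity of unitarizable admissible representations of finite length, or from Harish--Chandra's theory of intertwining operators. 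With those two points either proved or explicitly cited, your outline is a faithful reconstruction of the standard argument; as written, it is a sketch whose crux is deferred to the same source the paper cites.
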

\begin{proof} \cite{w}.
\end{proof}

\vskip .2in
Motivated by (\cite{bdk}, 2.1), we proceed as follows.
By the standard theory of tempered irreducible representations due to Harish--Chandra (see \cite{w}), for an irreducible tempered representation  $\pi\in \Irr(G)$, 
there exists a standard Levi subgroup  $M$ and a square--integrable modulo center representation $\sigma$ of $M$ such that 
$\pi\hookrightarrow i_{GM}(\sigma)$.  The pair is $(M, \sigma)$ is unique up to a conjugation (see Lemma \ref{l-3}). 
We call the equivalence class   $[M, \sigma]$ under conjugation of the pair  $(M, \sigma)$ the $t$--infinitesimal character
of $\pi$. The set of equivalence of such pairs we denote by $\Theta_t(G)$.

\vskip .2in
For a pair $(M, \sigma)$, we define a natural map $\Psi^u(M)\longrightarrow \Theta_t(G)$ given by 
$$
\psi\longmapsto [M, \psi \sigma].
$$
The image is called a connected component of $\Theta_t(G)$. We denote it by  $\Theta_t(M, \sigma)$. This map induces a bijection
which enable us to identify
$$
\Theta_t(M, \sigma)=D^u(\sigma)/W(D(\sigma)).
$$

\vskip .2in
Thus, in view of Lemma \ref{l-200},   we may consider   
$$
\Theta_t(M, \sigma)\subset  
D(\sigma)/W(D(\sigma)).
$$
This realizes $\Theta_t(M, \sigma)$ as a Zariski dense subset of the affine variety 
$D(\sigma)/W(D(\sigma))$.

\vskip .2in
As in (\cite{bdk}, 2.1), we can decompose 

\begin{equation}\label{3000}
R_t(G)=\oplus_{\theta} R_t(G)(\theta),
\end{equation}
where $\theta$ ranges over connected components of $\Theta_t(G)$. Here 
$$
R_t(G)(\theta)
$$
is generated with all tempered irreducible representations which $t$--infinitesimal characters belong to $\theta$. We denote by $1_\theta$ the projector
$$
R_t(G)\rightarrow  R_t(G)(\theta),
$$
for all $\theta \in \Theta_t(G)$.

\vskip .2in
We end this section with an analogue for $Rep_t(G)$ of the Decomposition theorem    for  the category of all smooth complex representations of $G$
(see \cite{bdk}, 2.3; \cite{Be}, 2.10).

\begin{Lem}\label{l-4} Let $K\subset G$ be an open compact subgroup. Then, there exists a finite set $T_K$ consisting of connected components in $\Theta_t(G)$ such that for each irreducible tempered
  representation $\pi\in Rep_t(G)$, having non--zero space of $K$--invariants, there exists $\theta\in T_k$ such that $\pi \in R_t(G)(\theta)$.
\end{Lem}
\begin{proof} By the Decomposition theorem (see \cite{bdk}, 2.3), there exists a finite set, say $S$, of pairs  $(N,\  \rho)$, where $N$ is a  standard Levi subgroup of $G$, and
 $\rho$ are irreducible supercuspidal representations, such that for every irreducible representation $\pi$ of $G$, having non--zero space of $K$--invariants, there exists   $(N, \rho)\in S$, and
 an unramified character  $\chi$ such that $\pi$ is a subqoutient of $i_{G,N}\left(\chi \rho\right)$.

 Now, assume that $\pi$ is as in the statement of the lemma. Then, there exists a standard Levi subgroup  $M$ and a square--integrable modulo center $\sigma$ of $M$ such that 
 $\pi\hookrightarrow i_{GM}(\sigma)$.  Moreover, there exists   a standard Levi subgroup $M'$ of $M$ (and of $G$), and a supercuspidal irreducible representation $\rho'$ such that $\sigma$ is an
 irreducible subquotient of $i_{M, M'}\left(\rho'\right)$. By induction in stages, $\pi$ must be a subquotient of $i_{G, M'}\left(\rho'\right)$. By standard theory of induced representation \cite{BZ1},
 the pair $(M', \rho')$ must  be $G$--conjugate to the one in $S$.  Thus, we may assume that $(M', \rho')\in S$ already. 

 Thus, it is enough to prove that  given $(N,\  \rho)\in S$ and given  standard Levi subgroup $M$ of $G$ such that $N\subset M$, there are finitely many of $\Psi^u(M)$--orbits of square--integrable
 modulo center  representations    of $M$ such they are subqoutients of the induced representations in the family  $i_{M, N}\left(\chi \rho\right)$ parameterized by $\chi\in \Psi(N)$.
 But that is easy. We can select sufficiently small open compact
 subgroup $L\subset M$ such that every irreducible representation  that appear as a subqoutient of  $i_{M, N}\left(\chi \rho\right)$ for some $\chi\in \Psi(N)$ has a non--zero space of $L$--invariants.

 Hence, we need to prove that there  are finitely many of $\Psi^u(M)$--orbits of square--integrable
 modulo center  representations    of $M$ having  a non--zero space of $L$--invariants. This is proved in \cite{w} (see (iii) in the introduction of \cite{w}). 
  \end{proof}

\section{Proof of Theorem \ref{main}}\label{proof}

We begin the proof of Theorem \ref{main} with the following lemma:

\begin{Lem}\label{p-0} Let $f$ be as in the statement of Theorem \ref{main}. Then, there exists a finite set $T_f$ consisting of connected components in $\Theta_t(G)$ such that for each irreducible tempered
  representation $\pi\in Rep_t(G)$ such that $f(\pi)\neq 0$ there exists $\theta\in T_f$ such that $\pi \in R_t(G)(\theta)$.
\end{Lem}
\begin{proof} This follows from the assumption (i) in Theorem \ref{main} combined with Lemma \ref{l-4}. 
  \end{proof}

  \vskip .2in 
  By Lemma \ref{p-0}, we can decompose $f$ into  $\mathbb Z$--linear forms $f_\Theta: R_t(G)\longrightarrow \mathbb C$, $\theta\in  T_f$,
  $$
  f=\sum_{\theta\in  T_f} \ f_\theta,
  $$
  where $f_\theta$ is defined as follows (see (\ref{3000})):
  $$
  f_\theta=f\circ 1_\theta.
  $$
  Obviously, each $f_\theta$ satisfies the assumptions analogous to (i) and (ii) in Theorem \ref{main}.

\vskip .2in 
Hence, in what follows we may assume   that $f=f_\theta$ for some $\theta \in \Theta_t(G)$. By the assumption (ii) of Theorem \ref{main}, we may assume that $\theta$
has  the form $\theta=\Theta_t(M, \sigma)$,  where $M$ is a standard maximal Levi subgroup of $G$, or $M=G$, and $\sigma$ is a $\sigma$ is a
square--integrable modulo center representation of $M$. We observe that 

$$
 \psi\in \Psi^u(M) \longmapsto  f\left(i_{GM}(\psi\sigma)\right) 
 $$
 is a regular function by the assumption (ii) of Theorem \ref{main}. Thus, by definition this means, that it is restriction of a regular function, say $a$
 of on the affine variety $\Psi(M)$. By  Lemma \ref{l-3}, we have

 \begin{equation}\label{p-01}
 a\in \mathbb C\left[D\right]^{W(D)},
\end{equation}
 where
 \begin{equation}\label{p-02}
 D=\Psi(M)/Stab_{\Psi(M)}(\sigma).
 \end{equation}
 We refer to previous section for the notation.

 \vskip .2in
Now, the following proposition completes the proof of the theorem. 
 
\begin{Prop}\label{p} Let $M$ be a standard maximal Levi subgroup of $G$, or $M=G$. Assume that $\sigma$ is a square--integrable modulo center representation of $M$. We define
  $D$ by (\ref{p-02}), and let $a$ be {\bf any} function in $\mathbb C\left[D\right]^{W(D)}$. Then, 
there exists $F\in C_c^\infty(G)$ such that 
$$
tr{(\pi(F))}
=\begin{cases}
a(\psi) \ \ \text{for} \ \ \pi=i_{GM}(\psi\sigma), \ \ \psi\in \Psi^u(M),\\
0 \ \ \  \ \text{for} \ \ \pi=i_{GN}(\psi\tau), \ \ \psi\in \Psi^u(N),\\
\end{cases}
$$
for any other standard  Levi subgroup $N$ and a square--integrable modulo center represenrepresentation $\tau$ such that
$\Theta_t(N, \tau)\neq \Theta_t(M, \sigma)$.
\end{Prop}
\begin{proof} The proof of Proposition \ref{p} is a generalization of  (\cite{clo}, 4.2, Proposition 1) where the proof of existence of pseudo--coefficients 
  for semisimple $G$ is given based also on \cite{bdk}.  We consider only the case $M$ is a standard maximal Levi subgroup of $G$.
  The case of $M=G$ is about the construction of a specific pseudo--coefficient of $\sigma$. The proof is on the same lines but considerably easier.

We remark that $\Psi^u(G)$ acts on  $\Psi^u(M)$ in a usual way:
$$
\psi\longmapsto \chi|_M \psi, \ \  \chi \in \Psi^u(G), \ \ \psi\in \Psi^u(M).
$$
For $\psi\in \Psi^u(M)$, the stabilizer 
$$
Stab_{\Psi^u(G)}(i_{GM}(\psi\sigma))
$$
is the group of all $\chi \in \Psi^u(G)$ such that 
$$
\chi i_{GM}(\psi\sigma)\simeq  i_{GM}(\psi\sigma).
$$
We remind the reader that for all $\chi \in \Psi^u(G)$ we have

$$
\chi i_{GM}(\psi\sigma)\simeq  i_{GM}(\chi|_M \psi\sigma).
$$

\vskip .2in 
\begin{Lem}\label{p-2} 
 Assume that $\chi \in \Psi^u(G)$  and $\psi\in \Psi^u(M)$.
Then, for each irreducible constituent $\pi$ of $i_{GM}(\psi\sigma)$, the multiplicity of 
$\chi\pi$ in $\chi i_{GM}(\psi\sigma)$ is same as that of $\pi$  in $i_{GM}(\psi\sigma)$.
\end{Lem}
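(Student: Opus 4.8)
The plan is to deduce the statement from the fact that twisting by an unramified unitary character is an invertible exact functor, hence preserves Jordan--Hölder multiplicities.

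First I would observe that for $\chi\in\Psi^u(G)$ the assignment $(\pi,V)\mapsto(\chi\pi,V)$ --- same underlying vector space, the action multiplied by the scalar-valued character $\chi$, and the identity on morphisms --- defines an exact autoequivalence $\Phi_\chi$ of the category of smooth finite-length representations of $G$, with two-sided inverse $\Phi_{\chi^{-1}}$; in fact $\Phi_\chi$ is an isomorphism of categories acting as the identity on underlying vector spaces. Consequently $\Phi_\chi$ carries the lattice of subrepresentations of any $V$ bijectively onto that of $\chi V$; in particular it sends irreducible representations to irreducible representations via $\pi\mapsto\chi\pi$ (distinct irreducibles going to distinct irreducibles, since one can twist back by $\chi^{-1}$), and it takes any composition series of $V$ with successive quotients $\pi_1,\dots,\pi_r$ to a composition series of $\chi V$ with successive quotients $\chi\pi_1,\dots,\chi\pi_r$.

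Applying this with $V=i_{GM}(\psi\sigma)$ then yields exactly the assertion: for each irreducible constituent $\pi$ of $i_{GM}(\psi\sigma)$, the Jordan--Hölder multiplicity of $\chi\pi$ in $\chi\, i_{GM}(\psi\sigma)$ equals the multiplicity of $\pi$ in $i_{GM}(\psi\sigma)$. Equivalently, the argument can be phrased on the Grothendieck group: twisting by $\chi$ is an automorphism of $R(G)$ which permutes the $\mathbb Z$-basis of irreducibles by $[\pi]\mapsto[\chi\pi]$, hence preserves every coefficient in the expansion of $[i_{GM}(\psi\sigma)]$ in that basis. (The isomorphism $\chi\, i_{GM}(\psi\sigma)\simeq i_{GM}(\chi|_M\psi\sigma)$ recalled just before the lemma is not actually needed for the statement itself, although it will be used in what follows.) There is no genuine obstacle here; the one point worth making explicit is that $\Phi_\chi$ really is invertible --- this is precisely what guarantees that multiplicities are preserved exactly, rather than merely compared in one direction.
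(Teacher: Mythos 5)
Your argument is correct; it simply spells out why twisting by $\chi$ is an invertible exact functor and hence preserves Jordan--H\"older multiplicities, which is exactly the reasoning behind the paper's one-word proof (``Obvious''). Nothing is missing and no different route is taken.
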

\begin{proof} Obvious.
\end{proof}

\vskip .2in
\begin{Lem}\label{p-3} 
Assume that for $\chi \in \Psi^u(G)$  and $\psi\in \Psi^u(M)$
there exists an irreducible constituent $\pi$ of $i_{GM}(\psi\sigma)$ such that $\chi\pi$ is an irreducible constituent of $i_{GM}(\psi\sigma)$.
Then,  $\chi\in Stab_{\Psi^u(G)}(i_{GM}(\psi\sigma))$. In particular, we have
$$
Stab_{\Psi^u(G)}(\pi)\subset  Stab_{\Psi^u(G)}(i_{GM}(\psi \sigma)).
$$
\end{Lem}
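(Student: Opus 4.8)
The plan is to reduce everything to Harish--Chandra's Lemma~\ref{l-3}. First I would observe that, since $\sigma$ is square--integrable modulo center and $\psi,\chi|_M\in\Psi^u(M)$ are unitary, both $\psi\sigma$ and $\chi|_M\psi\sigma$ are again square--integrable modulo center; hence the parabolically induced representations $i_{GM}(\psi\sigma)$ and $i_{GM}(\chi|_M\psi\sigma)$ are tempered, and in particular completely reducible, so that every irreducible constituent of either one occurs in it as a subrepresentation (a direct summand).

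Next I would bring in the hypothesis together with the relation $\chi\,i_{GM}(\psi\sigma)\simeq i_{GM}(\chi|_M\psi\sigma)$ recalled just before Lemma~\ref{p-2}. Since $\pi$ is an irreducible constituent of $i_{GM}(\psi\sigma)$, the twist $\chi\pi$ is an irreducible constituent of $\chi\,i_{GM}(\psi\sigma)\simeq i_{GM}(\chi|_M\psi\sigma)$; and by assumption $\chi\pi$ is also a constituent of $i_{GM}(\psi\sigma)$. Combining this with the complete reducibility from the first step, $\chi\pi$ is a \emph{common} irreducible subrepresentation of $i_{GM}(\psi\sigma)$ and of $i_{GM}(\chi|_M\psi\sigma)$.

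Now I would apply Lemma~\ref{l-3} to the square--integrable modulo center data $(M,\psi\sigma)$ and $(M,\chi|_M\psi\sigma)$: the common subrepresentation $\chi\pi$ forces the existence of $w\in G$ with $wMw^{-1}=M$ and $w(\psi\sigma)\simeq\chi|_M\psi\sigma$. Since $w$ normalizes $M$, the ``Moreover'' part of Lemma~\ref{l-3}, applied with $\psi\sigma$ in place of $\sigma$, yields $i_{GM}(\psi\sigma)\simeq i_{GM}(w(\psi\sigma))\simeq i_{GM}(\chi|_M\psi\sigma)\simeq \chi\,i_{GM}(\psi\sigma)$, which is exactly the assertion $\chi\in Stab_{\Psi^u(G)}(i_{GM}(\psi\sigma))$.

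The only step needing a word of justification is the upgrade from ``irreducible constituent'' to ``irreducible subrepresentation'' in the second paragraph, i.e.\ the semisimplicity of tempered representations of finite length; once that standard fact is granted, the rest is a direct invocation of Lemma~\ref{l-3}, so I do not anticipate a genuine obstacle here.
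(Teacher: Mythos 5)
Your proof is correct and takes essentially the same route as the paper: identify $\chi\pi$ as a common irreducible constituent of $i_{GM}(\psi\sigma)$ and $i_{GM}(\chi|_M\psi\sigma)$, apply Lemma~\ref{l-3} to get $w$ with $w(\psi\sigma)\simeq\chi|_M\psi\sigma$, and use the ``Moreover'' part to conclude $\chi\in Stab_{\Psi^u(G)}(i_{GM}(\psi\sigma))$. Your explicit remark that unitary induction from square--integrable (modulo center) data is semisimple, so constituents are subrepresentations as required by Lemma~\ref{l-3}, is a point the paper passes over silently, and it is correctly handled.
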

\begin{proof} First, 
$\chi\pi$ is a common constituent of $i_{GM}(\psi\sigma)$ and $i_{GM}(\chi|_M \psi\sigma)$. So, by Lemma \ref{l-3}, there exists $w\in W(M)$ 
such that 
$$
\chi|_M\psi\sigma=w(\psi\sigma).
$$
Then, again  by Lemma \ref{l-3}, we obtain
$$
\chi i_{GM}(\psi\sigma) \simeq  i_{GM}(\chi|_M \psi\sigma) \simeq  i_{GM}(\psi\sigma).
$$
\end{proof}

\vskip .2in
\begin{Lem}\label{p-4} Let $\psi\in \Psi^u(M)$. Then, we have the following:
\begin{itemize}
\item[(i)] If $\chi\in Stab_{\Psi^u(G)}(i_{GM}(\psi\sigma))$, then 
$a(\chi|_M \psi)=a(\psi)$.
\item[(ii)] For each $\eta\in \Psi(G)$ and  $\chi\in Stab_{\Psi^u(G)}(i_{GM}(\psi\sigma))$, we have
$$
a(\chi|_M \eta|_M \psi)=a(\eta|_M \psi).
$$
\end{itemize}
\end{Lem}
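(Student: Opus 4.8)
The plan is to convert the hypothesis $\chi\in Stab_{\Psi^u(G)}(i_{GM}(\psi\sigma))$ into a relation $\chi|_M\psi\sigma\simeq w(\psi\sigma)$ with $w\in W(D)$, and then to read off both assertions by substituting into the $W(D)$-action \eqref{20001}, under which $a$ is invariant. First I would rewrite the hypothesis as $i_{GM}(\chi|_M\psi\sigma)\simeq i_{GM}(\psi\sigma)$, using the identity $\chi\,i_{GM}(\psi\sigma)\simeq i_{GM}(\chi|_M\psi\sigma)$ recalled just above. Two isomorphic finite-length modules share each irreducible subrepresentation, and $\psi\sigma$ and $\chi|_M\psi\sigma$ are unitary unramified twists of the square--integrable modulo center representation $\sigma$; hence Lemma~\ref{l-3} applies and produces $\dot w\in N_G(M)$, i.e.\ an element $w\in W(M)$, with $\chi|_M\psi\sigma\simeq w(\psi\sigma)=w(\psi)\,w(\sigma)$. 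Rearranging gives $w(\sigma)\simeq\psi_w\sigma$ with $\psi_w:=\chi|_M\,\psi\,w(\psi)^{-1}\in\Psi(M)$, so $w(\sigma)\in\Psi(M)\sigma=D$, whence $w(D)=D$, that is $w\in W(D)$, and $\psi_w$ is an admissible choice of the character attached to $w$ in \eqref{20000}.

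For (i) I would then evaluate the action \eqref{20001} of this $w$ at the coset $\psi\,Stab_{\Psi(M)}(\sigma)$, obtaining
\[
w.\bigl(\psi\,Stab_{\Psi(M)}(\sigma)\bigr)=\psi_w\,w(\psi)\,Stab_{\Psi(M)}(\sigma)=\chi|_M\,\psi\,Stab_{\Psi(M)}(\sigma).
\]
Since $a$ is invariant under right translation by $Stab_{\Psi(M)}(\sigma)$ and $W(D)$-invariant for \eqref{20001}, this yields $a(\chi|_M\psi)=a(w.\psi)=a(\psi)$.

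For (ii) I would run the identical computation with $\psi$ replaced by $\eta|_M\psi$, using in addition that conjugation by $w$ fixes the restriction to $M$ of a character of $G$: for $m\in M$ we have $w(\eta|_M)(m)=\eta(\dot w^{-1}m\dot w)=\eta(m)$, because $\eta$ is a group homomorphism on $G$, so $w(\eta|_M)=\eta|_M$. Therefore
\[
w.\bigl(\eta|_M\psi\,Stab_{\Psi(M)}(\sigma)\bigr)=\psi_w\,w(\eta|_M)\,w(\psi)\,Stab_{\Psi(M)}(\sigma)=\eta|_M\,\chi|_M\,\psi\,Stab_{\Psi(M)}(\sigma),
\]
and the same invariance properties of $a$ give $a(\chi|_M\eta|_M\psi)=a(\eta|_M\psi)$. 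In particular (i) is the special case $\eta=1$.

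The only step carrying real content is the first one — seeing that the isomorphism $\chi\,i_{GM}(\psi\sigma)\simeq i_{GM}(\psi\sigma)$ is precisely the input Lemma~\ref{l-3} needs in order to produce a Weyl element $w\in W(D)$ for which $\psi_w$ may be taken to be $\chi|_M\,\psi\,w(\psi)^{-1}$; once this is in place, both parts are a direct substitution into \eqref{20001}. The remaining points are routine: that $\psi_w$ is determined only modulo $Stab_{\Psi(M)}(\sigma)$ is harmless because $a$ is $Stab_{\Psi(M)}(\sigma)$-invariant and $\Psi(M)$ is abelian, and that $w$ preserves $Stab_{\Psi(M)}(\sigma)$ (already noted in the text) makes \eqref{20001} well defined; for (ii) one only needs the triviality of the $w$-action on restrictions of characters of $G$.
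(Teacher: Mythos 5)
Your proposal is correct, and for part (i) it is essentially the paper's own argument: convert $\chi\in Stab_{\Psi^u(G)}(i_{GM}(\psi\sigma))$ into $i_{GM}(\chi|_M\psi\sigma)\simeq i_{GM}(\psi\sigma)$, invoke Lemma~\ref{l-3} to get $w\in W(M)$ with $\chi|_M\psi\sigma\simeq w(\psi\sigma)$, observe $w\in W(D)$ with $\psi_w$ representable by $\chi|_M\,\psi\,w(\psi)^{-1}$, and conclude from the invariance of $a$ under \eqref{20001} and under $Stab_{\Psi(M)}(\sigma)$. Where you diverge is part (ii): the paper reduces to the case of unitary $\eta$ (implicitly using that $a$ is regular and $\Psi^u(G)$ is Zariski dense in $\Psi(G)$), notes that then $Stab_{\Psi^u(G)}(i_{GM}(\eta|_M\psi\sigma))=Stab_{\Psi^u(G)}(i_{GM}(\psi\sigma))$, and quotes part (i) with $\eta|_M\psi$ in place of $\psi$; you instead reuse the \emph{same} Weyl element $w$ produced at $\psi$ and the elementary identity $w(\eta|_M)=\eta|_M$ (valid because $\eta$ is a character of all of $G$) to compute $w.(\eta|_M\psi)\,Stab_{\Psi(M)}(\sigma)=\eta|_M\chi|_M\psi\,Stab_{\Psi(M)}(\sigma)$ directly for arbitrary $\eta\in\Psi(G)$. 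Your route buys a slightly more self-contained treatment of (ii) -- it avoids the density/unitarity reduction altogether and does not require applying Lemma~\ref{l-3} a second time at $\eta|_M\psi$ -- while the paper's route is shorter on the page because it recycles (i) verbatim once the stabilizer equality is noted. Both are sound.
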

\begin{proof}  We prove (i). Since $\chi\in Stab_{\Psi^u(G)}(i_{GM}(\psi\sigma))$, we obtain
$$
 i_{GM}(\chi|_M \psi\sigma) \simeq \chi i_{GM}(\psi\sigma)\simeq  i_{GM}(\psi\sigma).
$$
So,  by Lemma \ref{l-3}, there exists $w\in W(M)$ 
such that 
$$
\chi|_M\psi\sigma\simeq w(\psi\sigma) \simeq w(\psi) w(\sigma).
$$

By definition of $W(D)$ (see (\ref{20000})), this implies 
$w\in W(D)$, and above relation can be written as follows:
$$
\chi|_M\psi\sigma\simeq \psi_w w(\psi)\sigma,
$$
where
$$
\psi_w=  w(\psi)^{-1} \chi|_M\psi.
$$
Consequently, by the definition of the action of $W(D)$ on $D$ (see (\ref{20001})) we obtain
$$
\chi|_M\psi  Stab_{\Psi(M)}(\sigma)= \psi_w w(\psi) Stab_{\Psi(M)}(\sigma)= w.\psi Stab_{\Psi(M)}(\sigma).
$$ 
This implies $a(\chi|_M \psi)=a(\psi)$. This proves (i).

To prove (ii), we may assume that  $\eta$ is unitary. Then, we obviously have
$$
Stab_{\Psi^u(G)}(i_{GM}(\eta|_M\psi \sigma))=Stab_{\Psi^u(G)}(i_{GM}(\psi \sigma)).
$$
Now, the claim follows from (i).
\end{proof}

\vskip .2in

Now, in order to complete the proof of Proposition \ref{p}, we apply (\cite{bdk}, Theorem 1.2). We define a $\mathbb Z$--linear form $f: R(G)\longrightarrow \mathbb C$ in
several steps.  We warn the reader that we use the same letter for a functional different than one from the statement of Theorem \ref{main}.

\vskip .2in
\noindent{1.} For each $\Psi^u(G)$--orbit $\cal O$ in $\Psi^u(M)$, we fix a representative $\psi_{\cal O}\in \cal O$ and an irreducible 
constituent $\pi_{\cal O}$ in $i_{GM}(\psi_{\cal O}\sigma)$. By Lemma \ref{p-3}, we have
\begin{equation}\label{p-4a}
Stab_{\Psi^u(G)}(\pi_{\cal O})\subset  Stab_{\Psi^u(G)}(i_{GM}(\psi_{\cal O}\sigma)).
\end{equation}
The quotient is finite and if $\chi$ ranges over representatives of
the quotient, then $\chi \pi_{\cal O}$ ranges over the set of all mutually non--equivalent irreducible subrepresentations in 
$i_{GM}(\psi_{\cal O} \sigma)$ which are $\Psi^u(G)$--equivalent to $\pi_{\cal O}$. Any of those representations, have the same multiplicity 
in  $i_{GM}(\psi_{\cal O} \sigma)$. Let $m_{\cal O}$ be the sum of their multiplicities. We define:

$$
f\left(\chi\pi_{\cal O}\right)=\frac{a(\psi_{\cal O})}{m_{\cal O}}, \ \  \chi\in Stab_{\Psi^u(G)}(i_{GM}(\psi_{\cal O}\sigma)).
$$

\vskip .2in
\noindent{2.} For each $\chi \in \Psi^u(G)$, we obviously have 
$$
Stab_{\Psi^u(G)}(\chi \pi_{\cal O})=Stab_{\Psi^u(G)}(\pi_{\cal O})
$$
and 
$$
Stab_{\Psi^u(G)}(i_{GM}(\chi|_M\psi_{\cal O}\sigma))=Stab_{\Psi^u(G)}(i_{GM}(\psi_{\cal O}\sigma)).
$$
By, Lemma \ref{p-2} and these remarks, the sum of multiplicities of  $\Psi^u(G)$--equivalent representations of $\pi_{\cal O}$
which belong to $i_{GM}(\chi|_M\psi_{\cal O}\sigma)$ is again $m_{\cal O}$. We let 
$$
f\left(\chi\pi_{\cal O}\right)=\frac{a(\chi|_M \psi_{\cal O})}{m_{\cal O}}, \ \  \chi \in \Psi^u(G).
$$
Lemma \ref{p-4} (ii) shows that this is well--defined.

\vskip .2in
\noindent{3.} For any other tempered irreducible representation (and, in particular, square--integrable modulo center representation) $\pi$ of $G$ we let 
$$ 
f(\pi)=0.
$$

\vskip .2in
\noindent{4.} For any quasi--tempered irreducible representation $\pi$ of $G$, we can write $\pi=\chi\pi^u$, where $\chi \in \Psi^r(G)$ and $\pi^u$ is tempered.  We let 
$$ 
f(\pi)=0,
$$
if $\pi^u$ is not in $\Psi^u(G)\pi_{\cal O}$ for any orbit $\cal O$ described in 1. But, if   $\pi^u\in \Psi^u(G)\pi_{\cal O}$, for some $\cal O$, then
we can write  $\pi^u= \psi \pi_{\cal O}$, for some $\psi \in \Psi^u(G)$ uniquely determined modulo  $Stab_{\Psi^u(G)}(\pi_{\cal O})$. We let
$$
f\left(\pi\right)=\frac{a(\chi|_M  \psi|_M  \psi_{\cal O})}{m_{\cal O}}.
$$
Using (\ref{p-4a}) and Lemma \ref{p-4} (ii) we see that this is well--defined.

\vskip .2in
\noindent{6.} Finally, we define $f$ on non--tempered Langlands quotients (see Lemma \ref{l-1}).  let  $f$ to be  equal to zero on all standard modules 
induced from proper parabolic subgroups except in the following two obvious cases:

\begin{itemize}
  \item[(a)] The standard module  $i_{GM}(\chi \psi\sigma)$, 
where $\chi \in \Psi(M)^{r, +}$ and  $\psi\in \Psi^u(M)$. In this case, we let 
$$
f\left(i_{GM}(\chi \psi\sigma)\right)=a(\chi \psi).
$$

\item[(b)] It is also possible that $\chi \in \Psi(M)^r$ belongs to the positive Weyl Chamber for the opposite parabolic $\overline{P}$ (see beginning of previous section).
  Then, there exists a unique standard maximal parabolic subgroup $Q$ with standard Levi $N$, and $w\in G$ such that $N=wMw^{-1}$. Now, by (\cite{bdk}, Lemma 5.3 (iii)),  we have
$$
i_{GM}(\chi \psi\sigma)=i_{GN}(w(\chi) w(\psi)w(\sigma)),
$$
in $R(G)$. Also, we have  $w(\chi)\in  \Psi(N)^{r, +}$. On the standard module $i_{GN}(w(\chi) w(\psi)w(\sigma))$ we let
$$
f\left(i_{GN}(w(\chi) w(\psi)w(\sigma))\right)= a(\chi \psi).
$$
Thus, we have
$$
f\left(i_{GM}(\chi \psi\sigma)\right) = f\left(i_{GN}(w(\chi) w(\psi)w(\sigma))\right)= a(\chi \psi),
$$
for $\chi \in \Psi(M)^r$ such that  $w(\chi)\in  \Psi(N)^{r, +}$.
\end{itemize}

The third case is that $\chi \in \Psi(M)^r$ is in neither chamber. Then,  $\chi \in \Psi(G)^r$, by standard description of unramified characters (\cite{muic}, Section 2). In this case  
$$
i_{GM}(\chi \psi\sigma)=\chi i_{GM}(\psi\sigma)
$$
is a quasi--tempered representation, and, by
$$
f\left(i_{GM}(\chi \psi\sigma)\right)=f\left(\chi i_{GM}( \psi\sigma)\right)= a(\chi \psi),
$$
by 1.--4.

\vskip .2in 

This  completes the construction of  $\mathbb Z$--linear form $f: R(G)\longrightarrow \mathbb C$.  In order to complete the proof of 
the proof of Proposition \ref{p}, we just need to check that it satisfies the assumptions of (\cite{bdk}, Theorem 1.2). First, let $N$ be a standard Levi subgroup of $G$ contained in $M$,
and $\rho$ an irreducible supercuspidal representation of $N$ such that $\sigma$ is an irreducible subqoutient of  $i_{M, N}\left(\rho\right)$. Then, by construction, $f$ is zero on
irreducible representations which are not irreducible subqoutients of members   of the family  $i_{M, N}\left(\chi \rho\right)$ parameterized by $\chi\in \Psi(N)$. Then, as in the
proof of Lemma \ref{l-4}, there exists an open compact subgroup $K$ such that $f$ is zero on all irreducible representations which does not have a non--zero $K$--invariant vector.
This is (ii) in (\cite{bdk}, 1.2). It remains to check  (i) in (\cite{bdk}, 1.2). We need to check that for an arbitrary standard Levi subgroup $N$ of $G$ and an irreducible representation
$\tau$ of $N$, the function $\chi\longmapsto f\left(i_{G, N}(\chi \tau)\right)$ is regular on $\Psi(N)$. By Lemma \ref{l-1} applied to $N$, $\tau$ is a $\mathbb Z$--linear combination  
of standard modules for $N$. So,  instead of being irreducible, we may assume that
$\tau$ is a standard module for $N$ i.e.,
$$
\tau=i_{NN'}\left(\chi'\tau'\right),
$$
$N'$ is a standard Levi subgroup, $\tau'$ is an irreducible tempered representation of $N'$ and $\chi'\in \Psi^{r, +}(N', N)$. Here, by definition $\Psi^{r, +}(N', N)$ is an analogue of
$\Psi^{r, +}(N', G)\overset{def}{=}\Psi^{r, +}(N')$ defined in previous section. Now, by induction in stages, we have
$$
i_{G, N}\left(\chi \tau\right)=i_{G, N'}\left(\chi|_{N'}\chi' \tau'\right).
$$
We decompose $\chi=\chi^r\chi^u$ into its real $\chi^r\in \Psi^r(N)$   and unitary part $\chi^u\in \Psi^r(N)$. Let $N{''}$ be a standard Levi subgroup such that
$N'\subset N{''}\subset N$ obtained by adjoining all simple roots orthogonal to $\chi^r|_{N'}\chi'$ (see \cite{muic}, Section 2). Then, $\chi^r|_{N'}\chi'$  is an unramified character of
$N{''}$ which is not orthogonal to any simple root that determines standard parabolic subgroup of $N{''}$. In particular, there exists $w\in G$ such that
$N_1{''}=wN{''}w^{-1}$ is a standard Levi subgroup, and  
$$
w\left(\chi^r|_{N'}\chi' \right)\in \Psi^{r, +}(N_1{''})
$$
(see for example \cite{muic-0}, Section 1). 
Also, we can write
$$
i_{G, N'}\left(\chi|_{N'}\chi' \tau'\right)= i_{G, N{''}}\left(\chi^r|_{N'}\chi' \ i_{N', N{''}}\left(\chi^u|_{N'}\tau'\right)\right).
$$
Obviously, $i_{N', N{''}}\left(\chi^u|_{N'}\tau'\right)$ is a direct sum of irreducible tempered representations, say $\tau{''}$ of $N{''}$. This implies that
$i_{G, N'}\left(\chi|_{N'}\chi' \tau'\right)$ is a direct sum induced representations:
$$
i_{G, N{''}}\left(\chi|_{N'}\chi' \tau{''}\right).
$$
By above, in $R(G)$, we have

\begin{equation}\label{eee}
i_{G, N{''}}\left(\chi|_{N'}\chi' \tau{''}\right)=i_{G, N_1{''}}\left(w\left(\chi|_{N'}\chi'\right) w\left(\tau{''}\right)\right).
\end{equation}
But the last induced representation is a standard module. Now, by the construction of $f$, $f=0$ on all standard modules except those described in steps 1.--5. above.
This means that we have one of the following two cases:

\noindent{a)} $N{''}$ is conjugate to $G$. In this case $N_1{''}=N{''}=N'=G$, $\tau'$ is tempered irreducible representation of $G$, and
$i_{G, N'}\left(\chi|_{N'}\chi' \tau'\right)=\chi \chi' \tau'$. Thus, by the construction 1.--4., $\chi\longmapsto  f(\chi \chi' \tau')$ is regular.

\noindent{b)} $N{''}$  conjugate to $M$. In this case, $N'=N{''}$, and  $\tau{'}$ must be conjugate to an element of the orbit $\Psi^u(M)\sigma$ (see 5. above).
The discussion in 5. implies that  $\chi\longmapsto  f(i_{G, N'}\left(\chi|_{N'}\chi' \tau'\right))$ is regular.

This finally verifies (i) of (\cite{bdk}, 1.2), and completes the proof of the proposition.

\end{proof}

\end{document}